\theoremstyle{definition}
\newtheorem{claim}{}
\newcommand{\artlabel}[1]{{\sc #1.}}
\newif\ifdisplaynotes
\newcounter{piznote}
\newcounter{yknote}
\newcommand{\printname}[1]
       {\smash{\makebox[0pt]{\hspace{-1.0in}\raisebox{8pt}{\tiny #1}}}}
\newcommand{\labell}[1]
  {\ifdisplaynotes
      \label{#1}\printname{#1}
   \else
      \label{#1}
   \fi}
\newif\ifhighlighting
\newcommand{\hide}[1]{}
\def \A{\ifmmode{{A}}\fi}
\def \B{\ifmmode{{B}}\fi}
\def \C{\ifmmode{{C}}\fi}
\def \D{\ifmmode{{D}}\fi}
\def \E{\ifmmode{{E}}\fi}
\def \F{\ifmmode{{F}}\fi}
\def \G{\ifmmode{{G}}\fi}
\def \H{\ifmmode{{H}}\fi}
\def \I{\ifmmode{{I}}\fi}
\def \J{\ifmmode{{J}}\fi}
\def \K{\ifmmode{{K}}\fi}
\def \L{\ifmmode{{L}}\fi}
\def \M{\ifmmode{{M}}\fi}
\def \N{\ifmmode{{N}}\fi}
\def \O{\ifmmode{{O}}\fi}
\def \P{\ifmmode{{P}}\fi}
\def \Q{\ifmmode{{Q}}\fi}
\def \R{\ifmmode{{R}}\fi}
\def \S{\ifmmode{{S}}\fi}
\def \T{\ifmmode{{T}}\fi}
\def \U{\ifmmode{{U}}\fi}
\def \V{\ifmmode{{V}}\fi}
\def \W{\ifmmode{{W}}\fi}
\def \X{\ifmmode{{X}}\fi}
\def \Y{\ifmmode{{Y}}\fi}
\def \Z{\ifmmode{{Z}}\fi}
\def \A{\ifmmode{{\rm A}}\fi}
\def \B{\ifmmode{{\rm B}}\fi}
\def \C{\ifmmode{{\rm C}}\fi}
\def \D{\ifmmode{{\rm D}}\fi}
\def \E{\ifmmode{{\rm E}}\fi}
\def \F{\ifmmode{{\rm F}}\fi}
\def \G{\ifmmode{{\rm G}}\fi}
\def \H{\ifmmode{{\rm H}}\fi}
\def \I{\ifmmode{{\rm I}}\fi}
\def \J{\ifmmode{{\rm J}}\fi}
\def \K{\ifmmode{{\rm K}}\fi}
\def \L{\ifmmode{{\rm L}}\fi}
\def \M{\ifmmode{{\rm M}}\fi}
\def \N{\ifmmode{{\rm N}}\fi}
\def \O{\ifmmode{{\rm O}}\fi}
\def \P{\ifmmode{{\rm P}}\fi}
\def \Q{\ifmmode{{\rm Q}}\fi}
\def \R{\ifmmode{{\rm R}}\fi}
\def \S{\ifmmode{{\rm S}}\fi}
\def \T{\ifmmode{{\rm T}}\fi}
\def \U{\ifmmode{{\rm U}}\fi}
\def \V{\ifmmode{{\rm V}}\fi}
\def \W{\ifmmode{{\rm W}}\fi}
\def \X{\ifmmode{{\rm X}}\fi}
\def \Y{\ifmmode{{\rm Y}}\fi}
\def \Z{\ifmmode{{\rm Z}}\fi}
\newcommand{\NN}{{\bf N}}
\newcommand{\QQ}{{\bf Q}}
\newcommand{\RR}{{\bf R}}
\newcommand{\ZZ}{{\bf Z}}
\newcommand{\cG}{{\mathcal G}}
\newcommand{\cI}{{\mathcal I}}
\newcommand{\cO}{{\mathcal O}}
\newcommand{\cR}{{\mathcal R}}
\def \GL {\operatorname{GL}}
\def \Diff {\operatorname{Diff}}
\def \pr {\text{pr}}
\newcommand{\id}{{\bf 1}}
\newcommand{\orb}[1]{{#1}_\natural}
\newcommand{\orbbar}[1]{\overline{#1}_\natural}
\def \ol {\overline}
\begin{document}
	
  \title{Smooth Lie groups actions are parametrized diffeological subgroups}
  
  \author{Patrick Iglesias-Zemmour}
  
  \email{piz@math.huji.ac.il}
  
  \address{LATP/CNRS, Universit\'e de Provence, Marseille, France 
  }
  
  \author{Yael Karshon}
  
  \email{karshon@math.toronto.edu}
  
  \address{Department of Mathematics, The University of Toronto, 
   40 St.~George Street, Toronto, Ontario M5S 2E4, Canada.}
  
  
  
  \begin{abstract}
    We show that every effective smooth action of a
    Lie group $\G$ on a manifold $\M$ is a diffeomorphism from
    $\G$ onto its image in $\Diff(\M)$, where the image is 
    equipped with the subset diffeology of the functional
    diffeology.
  \end{abstract}
  
  \maketitle
    
  \section*{Introduction}
  
  The group $\Diff(\M)$ of diffeomorphisms of a 
  manifold $\M$, and subgroups that preserve certain geometric
  structures (for example, the group of symplectomorphisms of a
  symplectic manifold), are important objects of research.
  Such groups are often considered heuristically 
  as infinite dimensional Lie groups,
  whose ``finite dimensional Lie subgroups" are taken to be
  the images of smooth finite dimensional
  Lie group actions. 
  
  There are several rigorous approaches to the structure of
  $\Diff(\M)$.  When $\M$ is compact, $\Diff(\M)$ is a Fr\'echet Lie group
  \cite{Omo74}. 
  An approach through Fr\"olicher structures is given in \cite{KrMi97}.
  
  In this paper we take a simpler rigorous approach: 
  the diffeological approach.
  Roughly, a diffeology on a set $X$ declares which maps from $\RR^n$
  to $X$ are smooth, for all $n \in \NN$.
  See Section~\ref{sec:The-diffeological-framework} for relevant definitions.
  If $\M$ is a manifold, there is a natural diffeology on $\Diff(\M)$,
  with respect to which
  the notion ``Lie subgroup of $\Diff(\M)$'' is unambiguous:
  it is a subgroup that, equipped with the subset diffeology,
  is a manifold.
  The usual notion of an effective smooth action of a Lie group $\G$ 
  on a manifold $\M$ is the same thing as a group monomorphism 
  $\G \to \Diff(\M)$ that is smooth in the diffeological sense.    
  {\em A priori} it is not obvious that in this situation
  the manifold structure on $\G$ is induced from the ambient diffeology
  of $\Diff(\M)$, i.e., that the smooth monomorphism is necessarily
  a diffeomorphism with its image.
  The purpose of this paper is to prove this fact.   
  Thus, a Lie subgroup of $\Diff(\M)$ really is the same thing
  as the image in $\Diff(\M)$ of a smooth Lie group action on~$\M$.
  We give a formal statement in diffeological terms:
  
  {\sc Theorem} --- {\it Let $\M$ be a manifold.  Equip $\Diff(\M)$ 
  with the functional diffeology.  
  Then every smooth monomorphism from a Lie group to $\Diff(\M)$ 
  is an induction.}
  
  We now rephrase this statement in more common terms.
  Let $\rho \colon \G \to \Diff(\M)$ be an effective smooth action
  of a Lie group $\G$ on a manifold $\M$.  That is, $\rho$ is
  a one-to-one group homomorphism, 
  and the map $(g,m) \mapsto \rho(g)(m)$
  from $\G \times \M$ to $\M$ is smooth.   
  Let $D$ be an open subset of $\RR^k$ for some $k$.
  Let $r \mapsto f_r$ be a map from $D$ to the image of $\G$ in $\Diff(\M)$. 
  Let $r \mapsto g_r$ be the map from $D$ to $G$
  such that $\rho(g_r)=f_r$ for all $r \in D$.
  Assume that the map $(r,m) \mapsto f_r(m)$, from $D \times \M$ to $\M$,
  is smooth.  Then the map $r \mapsto g_r$, from $D$ to $G$, is smooth.  
  
  For example, this theorem immediately implies that
  the ordinary smooth structure of the linear group $\GL(n,\RR)$ is induced 
  by the functional diffeology of $\Diff(\RR^n)$.  This fact was proved 
  in \cite{Igl85} but not as a consequence of a more general theorem. 
  More generally, most of the classical Lie groups -- the orthogonal groups,
  the unitary groups, the symplectic groups etc.\ --
  are Lie subgroups (in the diffeological sense) of groups of diffeomorphisms 
  of vector spaces.
  
  Sentences such as
  ``What are, modulo conjugacy, the maximal subtori of a group of
  symplectomorphisms?'', as considered in \cite{KKP},
  have an intrinsic diffeological meaning. 
  The above theorem shows that, fortunately, this meaning coincides with 
  the usual interpretation of such sentences.
  
  In Section~\ref{sec:Smooth-Lie-group-actions-on-manifolds}, 
  we prove the above theorem.
  In Section~\ref{sec:Simple-cases-of-embeddings}, we give a simple 
  sufficient condition for which the induction is an embedding; this 
  condition holds for the classical Lie groups that are listed above.
  
  Throughout this paper, all manifolds, and in particular 
  all Lie groups, are assumed to be Hausdorff, second countable,
  and finite dimensional.
  
  
  \section{The diffeological framework}
  \labell{sec:The-diffeological-framework}
  
  We review here definitions and constructions in diffeology
  that we will need.  More details can be found in \cite{PIZ05-10}.
  
  Let $\X$ be a set.  A {\em parametrization} on $\X$ is a map 
  from an open subset of $\RR^n$, for some $n \in \NN$, to $\X$.
  A {\em diffeology} on $\X$ is a set of parametrizations,
  whose elements are called \emph{plots}, that satisfies 
  three axioms:  the {\em covering axiom}  -- 
  constant maps are plots;
  the {\em locality axiom} -- being a plot is a local condition:
  given $f \colon U \to \X$, if every point in $U$ has a neighbourhood
  $V$ such that $f|_V$ is a plot, then $f$ is a plot;
  and the {\em smooth compatibility axiom} -- 
  precomposition of a plot with a smooth map (in the usual sense) is a plot.
  A {\em diffeological space} is a set equipped with a diffeology.
  
  Let $\M$ be a manifold.  There exists on $\M$ a natural 
  diffeology: the plots consist of those parametrizations that are smooth 
  in the usual sense. 
  On the group $\Diff(\M)$ of diffeomorphisms 
  of $\M$ there also exists a natural diffeology, called the 
  \emph{functional diffeology}:  the plots consist of those
  parametrizations $U \to \Diff(\M)$, $r \mapsto f_r$,
  for which the map $(r,m) \mapsto f_r(m)$ from $U \times \M$ to $\M$
  is smooth.
  
  The \emph{discrete diffeology} on a set $\X$ is the diffeology
  for which only the locally constant parametrizations are plots.
  A space with the discrete diffeology is called \emph{discrete}.
  
  A {\em smooth map} between two diffeological spaces is a map
  whose precomposition with every plot of the source is a plot of the target.
  Diffeological spaces and their smooth maps form a category.
  The isomorphisms of this category are called {\em diffeomorphisms}. 
  Thus, a diffeomorphism is a bijective map that is smooth 
  and whose inverse is smooth.
  
  On any subset $\A$ of a diffeological space $\X$,
  there exists a natural diffeology, called the
  {\em subset diffeology}. Its plots are the plots of $\X$ with values
  in $\A$.  If this diffeology is discrete, $\A$
  is called a {\em discrete subspace}.   
  In particular, any countable subset of a manifold inherits the discrete
  diffeology, for example $\QQ \subset \RR$.

  An \emph{induction} is
  an injective smooth map between diffeological spaces
  that is a diffeomorphism onto its image, 
  where the image is equipped
  with the subset diffeology.  For manifolds, every induction is an
  immersion, and every immersion is locally an induction, but an
  immersion need not be an induction, even if it is injective.
  For example, the
  parametrization $t \mapsto (\sin(t), \sin(2t))$
  from $(-\pi,+\pi)$ to $\RR^2$, of the lemniscate (figure eight), 
  is an injective immersion
  but it is not an induction.

  A diffeology on $\X$ determines a topology on $\X$, 
  called the \emph{D-topology},
  in which a subset of $\X$ is open if and only if
  its preimage by every plot is open.  
  Such a subset is called \emph{D-open}.   
  If a subset of $\X$ is D-open, then its subset topology 
  coincides with the D-topology of its subset diffeology.
  The quotient $\RR/(\ZZ+\alpha\ZZ)$, for $\alpha$ irrational,
  is an example of a highly nontrivial\footnote{
  $\RR/(\ZZ+\beta\ZZ)$ is diffeomorphic to $\RR/(\ZZ+\alpha\ZZ)$ 
  if and only if there exists 
  $\left( \begin{smallmatrix} a & b \\ c & d \end{smallmatrix} \right) 
    \in \GL(2,\ZZ)$
  such that $\beta = \frac{a \alpha + b}{c \alpha + d}$
  \cite{DonIgl85}.}
  diffeological space whose D-topology is trivial.

  On a manifold, the D-topology coincides with the classical topology.
  A {\em manifold} can be redefined as a diffeological space in which
  every point is contained in a D-open set that is diffeomorphic
  to an open subset of some $\RR^n$.  A map of manifolds
  is smooth in the usual sense if and only if it is smooth in the
  diffeological sense. A {\em submanifold} is just a subspace which is
  a manifold.
  
  The D-topology distinguishes between inductions:
  an induction that is also a homeomorphism with its image,
  when the image is taken with the subset topology induced
  from the D-topology of the ambient space, is called an {\em embedding}. 
  Not every induction is an embedding.  For example, the 
  irrational line $\RR \to \RR^2/\ZZ^2$ given by $x \mapsto [x,\alpha x]$
  for $\alpha$ irrational is an induction but is not an embedding.
  
  The quotient of a diffeological space $\X$ by any equivalence relation 
  $\cR$ has a natural diffeology, called the {\em quotient diffeology}.
  Its plots are those parametrizations that can be locally 
  lifted to plots of $\X$ along the projection $\pi \colon \X \to \X/\cR$.
  Thus, $f \colon U \to \X/\cR$ is a plot if and only if
  for every point $u \in U$ there exists a neighbourhood $V$ of $u$ in $U$
  and a plot $\tilde{f} \colon V \to \X$ 
  such that $f|_{V} = \pi \circ \tilde{f}$.
  A {\em strict map}
  $f \colon \X \to \Y$ is a smooth 
  map that identifies the quotient $\X/f$ with the image $f(\X)$,
  as diffeological spaces \cite{Don84}.
  
  A \emph{diffeological group $\G$} is a group equipped with a diffeology 
  such that the inversion and the multiplication are smooth. 
  If $\M$ is a manifold then $\Diff(\M)$, with its functional
  diffeology, is a diffeological group.
  A Lie group is the same thing as a diffeological group
  that is also a manifold.
  A \emph{Lie subgroup} of $\Diff(\M)$ is a subgroup that, 
  with the subset diffeology, is a Lie group.
  
  A {\em smooth action} of a diffeological group
  $\G$ on manifold $\M$ is a smooth homomorphism 
  $\rho \colon \G \to \Diff(\M)$, where $\Diff(\M)$ is equipped 
  with the functional diffeology.  When $\G$ is a Lie group,
  this is equivalent to the usual definition of a smooth $\G$ action on $\M$:
  the map $(g,m) \mapsto \rho(g)(m)$ is required to be smooth.
  The action is \emph{effective} if the homomorphism $\rho$ is injective.
  The tautological action of $\Diff(\M)$ on $\M$ is smooth and effective.
  
  
  \section{Smooth Lie group actions on manifolds} 
  \labell{sec:Smooth-Lie-group-actions-on-manifolds}
  
  \begin{text}
    In the following paragraphs, we use the following notation.
    $\G$ is a Lie group, $\M$ is a manifold, 
    and $\rho \colon \G \to \Diff(\M)$ is a smooth action 
    of $\G$ on $\M$.  
    We denote the stabilizer of $m$ in $\G$ by $\G_m$.
    We denote $\rho(g)(m)$ by $g \cdot m$,
    and we denote the $\G$-orbit of $m$ by $\G \cdot m$.
    For every point $m$ of $\M$, we denote by $\orb{m} \colon \G \to \M$ 
    the \emph{orbit map} $\orb{m}(g) := g \cdot m$. 
    
    The orbit map $\orb{m}$ descends to a smooth map 
    $\orbbar{m} \colon \G/\G_m \to \M$. Thus, 
    $\orb{m} = \orbbar{m} \circ \pi$, where $\pi \colon \G \to \G/\G_m$ is
    the projection. Moreover, because 
    the stablizer $\G_m$ of $m$ is a closed subgroup of $\G$, 
    this stabilizer $\G_m$ is an embedded submanifold of $\G$, 
    the quotient $\G/\G_m$ is a manifold, and the projection 
    $\pi \colon \G \to \G/G_m$ is a submersion. Moreover, $\pi$ is a 
    (locally trivial)
    principal bundle with $\G_m$ as structure group acting on 
    $\G$ by $\G_m \times \G \ni (h,g) \mapsto gh^{-1}$; see for example 
    \cite{KobNom63}.
  \end{text} 

  \begin{claim}
    \artlabel{The orbit map} 
    \labell{The-orbit-map}
    \begin{it}
      Let $\M$ be a manifold, let $\G$ be a Lie group,
      and let $\rho \colon \G \to \Diff(\M)$ be a smooth action of $\G$ on $\M$.
      Let $m$ be a point of $\M$.
      Then the orbit map $\orb{m} \colon \G \to \M$ is a strict map.
    \end{it}
    
    {\sc Note 1.} This statement means that the map $\orbbar{m}$
    is a diffeomorphism from $\G/\G_m$ to $G \cdot m$, 
    where the former is equipped with the quotient diffeology 
    of $\G$
    and the latter is equipped with the subset diffeology induced from $\M$.
    In other words, $\orbbar{m}$ is an induction, which implies in particular
    that the orbit $\G \cdot m$ 
    is a submanifold. 
    
    {\sc Note 2.} The induction $\orbbar{m} \colon G/G_m \to \M$ 
    is not necessarily an embedding.    For example, consider the group
    $\G = \RR$, the manifold $\M = \RR^2/\ZZ^2 \times \RR$, and the action
    $\rho(t)([x,y],\alpha) = ( [x + t , y + \alpha t] , \alpha)$.
    Let $m = ([x,y],\alpha)$.
    If $\alpha$ is rational, then $\orb{m}$ is an embedding,
    and if $\alpha$ is irrational, then $\orb{m}$ is an induction
    but not an embedding.
  \end{claim}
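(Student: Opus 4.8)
The plan is to establish the equivalent formulation from Note~1: that the induced map $\orbbar{m}\colon \G/\G_m \to \M$ is an induction onto the orbit $\G\cdot m$. The preamble already records that $\orbbar m$ is smooth and injective, and that $\pi\colon\G\to\G/\G_m$ is a submersion and a principal bundle (so the quotient diffeology on $\G/\G_m$ agrees with its manifold diffeology). Everything therefore reduces to the following: for each plot $q\colon U\to\M$ whose image lies in $\G\cdot m$, produce near every point of $U$ a smooth lift $\orbbar m^{-1}\circ q$ into the manifold $\G/\G_m$.

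First I would show that $\orbbar m$ is an immersion. The orbit map is $\G$-equivariant, $\orb m(g'g)=\rho(g')(\orb m(g))$, and this descends to equivariance of $\orbbar m$; since the left translations of $\G/\G_m$ and the diffeomorphisms $\rho(g')$ are diffeomorphisms, it suffices to check the rank at the base coset $[\,\id\,]$. There the derivative is the map $\Lie(\G)/\Lie(\G_m)\to T_m\M$ induced by $\xi\mapsto\frac{d}{dt}\big|_{t=0}\exp(t\xi)\cdot m$, whose kernel is exactly $\Lie(\G_m)$; hence it is injective and $\orbbar m$ is an immersion, of rank $k:=\dim\G/\G_m$ everywhere.

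Next comes the heart of the argument: a local normal form exhibiting the orbit as a union of plaques. Choosing a linear complement to $\Lie(\G_m)$ in $\Lie(\G)$ and combining the exponential map with a transverse frame on $\M$, I would build a chart on a neighbourhood $O$ of $m$ in which the single plaque $S$ through $m$---the image of a neighbourhood of $[\,\id\,]$ under $\orbbar m$---is an embedded slice $\{t_{k+1}=\dots=t_n=0\}$, and in which $\G\cdot m\cap O$ is a disjoint union of plaques indexed by their transverse coordinate $\tau=(t_{k+1},\dots,t_n)\in\Lambda\subseteq\RR^{n-k}$. The key point, which I expect to be the main obstacle, is to prove that $\Lambda$ is at most countable. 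This is exactly where the standing hypothesis that $\G$, and hence $\G/\G_m$, is second countable enters: the open set $\orbbar m^{-1}(O)$ has at most countably many connected components, each an immersed $k$-manifold covering a single plaque, so only countably many transverse positions can occur. Making this separation precise---ensuring that distinct cosets landing in $O$ either share a plaque or have distinct transverse coordinate, and that each plaque is embedded---is the delicate part.

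Granting the normal form, the induction property follows by a connectedness argument in the spirit of the irrational-line example above. Given a plot $q\colon U\to\M$ with image in $\G\cdot m$ and a point $u_0$ with $q(u_0)=m$ (the general base point $q(u_0)=g_0\cdot m$ is reduced to this case by composing with the diffeomorphism $\rho(g_0^{-1})$, using equivariance), I restrict to a connected neighbourhood $V$ of $u_0$ with $q(V)\subseteq O$. The composite $\tau\circ q\colon V\to\Lambda$ is continuous into an at most countable, hence totally disconnected, set, so it is constant equal to its value at $u_0$; thus $q(V)\subseteq S$. Since $S$ is embedded, $\orbbar m^{-1}\circ q|_V$ is smooth into $\G/\G_m$ (and, if a lift all the way to $\G$ is wanted, one composes with a local section of the principal bundle $\pi$). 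Undoing the translation by $g_0$ through the smooth left action of $\G$ on $\G/\G_m$, I conclude that $\orbbar m^{-1}\circ q$ is smooth near every point, so $\orbbar m$ is an induction and $\orb m$ is a strict map.
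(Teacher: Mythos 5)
Your overall route is the same as the paper's: reduce to showing that $\orbbar{m}$ is an induction, build a product chart near $m$ in which the orbit appears as slices, show that only countably many slices are met (via second countability), and then use connectedness of a plot's domain plus countability to force the plot into the single slice through $m$, where the lift is manifestly smooth. The paper's chart is $\psi \colon \cO \times \D \to \U_m$, $(a,x) \mapsto a \cdot x$, with $\cO$ the image of a local section of $\pi \colon \G \to \G/\G_m$ and $\D$ a disc transverse at $m$ to the orbit directions; your $\Lambda$ is its $\G \cdot m \cap \D$, and your final connectedness step is its observation that the $\D$-component of $\psi^{-1}(\gamma(v)\cdot m)$ is a smooth map into a countable set, hence constant. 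So the architecture matches. The problem is that the step you explicitly defer --- that $\G \cdot m \cap O$ is a disjoint union of plaques, so that ``each connected component of $\orbbar{m}^{-1}(O)$ covers a single plaque'' --- is asserted rather than proved, and it is not a technicality: it is the mathematical heart of the whole argument.

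Concretely, the claim you need is that the transverse coordinate $\tau$ is locally constant along the orbit. This is not automatic, because the chart was built using transversality of $\D$ to the orbit at the single point $m$; a priori, when the orbit re-enters $O$ through a point $p = a \cdot x$, it could fail to be tangent to the slice directions there, in which case the component through $p$ would sweep out an uncountable set of transverse values and the countability of $\Lambda$ would collapse. (By contrast, the two items you single out as delicate --- disjointness of slices with distinct transverse coordinates, and embeddedness of each slice --- are immediate from the construction of the chart.) The missing lemma does hold, and can be proved as follows, which is the manifold-side version of what the paper does. For $p = a \cdot x \in \G \cdot m \cap \U_m$, the plaque $\cO \cdot x$ is contained in $\G \cdot x = \G \cdot m$, so its tangent space at $p$ lies inside $\{\xi_\M(p) \mid \xi \in \cG\}$, the image of the differential of the orbit map; the plaque tangent has dimension $k = \dim \G - \dim \G_m$ because $\psi$ is a diffeomorphism, while the orbit tangent has dimension $k$ at \emph{every} point of the orbit because stabilizers along an orbit are conjugate, namely $\G_x = (a^{-1}g)\,\G_m\,(a^{-1}g)^{-1}$ when $g \cdot m = a \cdot x$; hence the two spaces coincide and $d(\tau \circ \orb{m}) = 0$. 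The paper encodes exactly this on the group side: it shows that the sets $\A_x = \{ g \in \G \mid g \cdot m \in \psi(\cO \times \{x\}) \}$ are open --- around each $g \in \A_x$ they contain the neighbourhood $\cO \cdot \G_x \cdot a^{-1} g$ --- and pairwise disjoint, so second countability of $\G$ yields the countability. Once you supply this lemma, your proof is complete and coincides with the paper's.
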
 
  
  \begin{proof} 
    Because the quotient map $\pi \colon \G \to \G/\G_m$ 
    admits smooth local sections, the map
    $\orbbar{m} \colon \G/\G_m \to \M$ is a smooth injection.
    Let $\sigma$ be a smooth section of $\pi$, let $\cO$ be its
    image in $\G$, and assume that $\id_\G \in \cO$. 
    Then $\cO$ is transverse to $\G_m$ at the identity $\id_\G$.
    
    The differential of the orbit map
    $\orb{m} \colon g \mapsto g \cdot m$, computed at the identity $\id_\G$, 
    is the map
    $\xi \mapsto \xi_\M(m)$, where $\xi$ is an element of the Lie algebra
    $\cG$ of $\G$ and $\xi_\M$ denotes the fundamental vector field on $\M$ 
    associated to $\xi$.
    The kernel of this map is equal to the Lie algebra $\cG_m$ 
    of $\G_m$:
    $$
      \cG_m = \T_{\id_\G} \G_m = \{ \xi \in \cG \mid \xi_\M(m) = 0 \}.
      $$
    The image of this map is the subspace
    $\cG \cdot m = \{ \xi_\M(m) \mid \xi \in \cG \}$
    of $T_m \M$.
    Let $\D$ be a disc in $\M$ through $m$ that is transverse 
    at $m$ to this subspace.

    Consider the map 
    $$ 
      \psi \colon \cO \times \D \to \M \ \ , \quad (a,x) \mapsto a \cdot x. 
    $$
    By the implicit function theorem,
        after possibly shrinking~$\cO$ and~$\D$, 
    the image $\U_m := \psi(\cO \times \D)$ is open in $\M$,
    and $\psi$ is a diffeomorphism of $\cO \times \D$ with $U_m$.

    For every $x \in D$, consider the set
    $$    \A_x := 
        \{ g \in \G \ | \ g \cdot m \in \psi(\cO \times \{ x \} ) \} . $$
    Note that $\A_x$ is nonempty if and only if $x \in \G \cdot m \cap \D$.

    Let us now show that
    \begin{equation} \tag{$\clubsuit$}
      \G \cdot m \cap \D \  \text{ is countable.}
    \end{equation}

    For this, we will first show that
    \begin{itemize}
    \item[($\diamondsuit$)]
      for every $x \in \D$, the set $A_x$ is an open subset of $\G$. 
    \item[($\heartsuit$)]
      for every $x,y \in \D$,
      if $x \neq y$ then $\A_x \cap \A_y = \varnothing$.
    \end{itemize}

    Let $g \in \G$. Suppose that $g \cdot m = a \cdot x$ for $a \in \cO$.
    The subset $\cO \cdot \G_x \cdot a^{-1} \cdot g$ of $\G$
    is a neighbourhood of $g$ in $\G$,
    and for every $g'$ in this subset, 
    $g' \cdot m \in \psi (\cO \times \{ x \} )$. 
    This shows ($\diamondsuit$).

    Now suppose that $g$ is in both $A_x$ and $A_y$.
    Then both $x$ and $y$ are equal to $\pr_\D \circ \psi^{-1}(g\cdot m)$,
    where $\psi^{-1} \colon \U_m \to \cO \times \D$
    is the inverse of $\psi$
    and where $\pr_\D \colon \cO \times \D \to \D$ is the projection
    to the second coordinate.
    So $x$ and $y$ are equal to each other.
    This shows ($\heartsuit$).

    Thus, $\{\A_x\}_{x \in \D}$ is a family of disjoint open subsets of $\G$.
    Because $\G$ is second countable, at most countably many elements 
    of this family are nonempty.  But $A_x$ is nonempty exactly if
    $x \in \G \cdot m \cap D$.  This shows~($\clubsuit$).

    Let us now check that the map $\orbbar{m} \colon \G/\G_m \to \M$ 
    is an induction (diffeomorphism with its image).
    Fix an arbitrary parametrization $\ol{\gamma} \colon \V \to \G/\G_m$,
    and assume that the composition $\orbbar{m} \circ \ol{\gamma}$ is smooth.
    We need to show that $\ol{\gamma}$ itself is smooth.

    Let $r_0$ be a point in $V$ and let $g \in G$ be such that
    $\ol{\gamma}(r_0)$ is equal to the coset $gG_m$ in $G/G_m$.
    We need to find a neighbourhood of $r_0$ in $V$ in which $\ol{\gamma}$
    is smooth.   Shrinking $V$, precomposing $\ol{\gamma}$ by 
    translation by $r_0$,
    and composing $\ol{\gamma}$ by left multiplication by the element $g^{-1}$
    of $G$, we reduce the problem to the following special case. 

    We assume that $\V$ is a small open ball centred at the origin~$0$, 
    that $\ol{\gamma}(0) = \pi(\id_\G)$,
    and that $\ol{\gamma}(\V) \subset \pi(\cO)$.
    The map $\gamma := \sigma \circ \ol{\gamma} \colon \V \to \cO$ 
    is then well defined, and it is a lift of $\ol{\gamma}$, that is,
    $\pi \circ \gamma = \ol{\gamma}$. 
    The map $v \mapsto \gamma(v) \cdot m$, being equal to the composition
    $\orbbar{m} \circ \ol{\gamma}$, is smooth by assumption; 
    also, it takes values in the subset $\U_m (= \psi(\cO \times \D))$ of~$\M$.

    By the definition of $\psi$, we can write
    $$\psi^{-1}(\gamma(v)\cdot m) = (\alpha(v),\mu(v)),$$ 
    where $\alpha$ 
    is a parametrization of $\G$ that takes values in $\cO$, 
    where $\mu$ is a parametrization of $\M$ that takes values in $\D$, 
    and where 
    $$\alpha(v) \cdot \mu(v) = \gamma(v) \cdot m$$
    for all $v \in \V$.  Rewriting the last equality as
    $\mu(v) = (\alpha(v)^{-1}\gamma(v)) \cdot m$, 
    we see that $\mu$ also takes values in $\G \cdot m$.
    So $\mu$ takes values in the set $\G \cdot m \cap \D$,
    which is countable by $(\clubsuit)$.

    Because $v \mapsto \psi^{-1}(\gamma(v) \cdot m)$, being the composition
    of two smooth maps, is smooth,
    the parametrizations $\alpha$ and $\mu$ are also smooth.
    Being a smooth map on an open ball that takes values in a
    countable set, the map $\mu$ is constant. So $\mu(v) = m$ for all $v$,
    and $\alpha(v) \cdot m = \gamma(v) \cdot m$ for all $v$.
    This, in turns, implies that $\pi(\alpha(v)) = \pi(\gamma(v))$ for all $v$.
    Because $\alpha(v)$ and $\gamma(v)$ both belong to $\cO$,
    composing with $\sigma$ gives $\gamma(v) = \alpha(v)$. 
    But we already know that $\alpha$ is smooth.  So $\gamma$ is also smooth,
    and, hence, so is $\ol{\gamma}$.
  \end{proof}

  \begin{claim}
    \artlabel{When $\G$ is countable} 
    \labell{When-G-is-countable}
    {\it Let $\M$ be a manifold, $\G$ a countable discrete group,
    and $\rho \colon \G \to \Diff(\M)$ a smooth action.
    Then the image of $\rho$ is a discrete subset of $\Diff(\M)$.
    In particular, if the action is effective, then $\rho$ is an induction.}
    
    {\sc Note 1.}  Because we assume manifolds to be second countable,
    a countable discrete group
    is the same thing as a zero dimensional Lie group.
    
    {\sc Note 2.} The image of a discrete diffeological group into 
    $\Diff(\M)$ by a smooth monomorphism has no reason to be discrete 
    {\em a priori}. For example, a Lie group $\G$ injects 
    into $\Diff(\G)$ 
    by left multiplication.  If $\G$ is equipped with the
    discrete diffeology, then the injection $\G \to \Diff(\G)$
    is smooth, but it is not an induction if $\dim G > 0$.
    We rule out this example by assuming that $\G$ is countable.
  \end{claim}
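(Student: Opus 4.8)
The plan is to show directly that any plot of $\Diff(\M)$ taking values in the image $\rho(\G)$ is locally constant; this is exactly the assertion that $\rho(\G)$, with the subset diffeology, is discrete. So let $r \mapsto f_r$ be such a plot, defined on an open subset $\U$ of some $\RR^k$. Since $\G$ is countable, the image $\rho(\G)$ is countable, and I would list its distinct elements as $\gamma_1, \gamma_2, \ldots$.

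First I would fix a point $r_0 \in \U$ and a closed ball $\bar B \subseteq \U$ centred at $r_0$ (the case $k=0$ being trivial), and set $F_i := \{ r \in \bar B \mid f_r = \gamma_i \}$. These sets are pairwise disjoint and cover $\bar B$. The key elementary observation is that each $F_i$ is closed in $\bar B$: two diffeomorphisms coincide if and only if they agree at every point, so $F_i = \bigcap_{m \in \M} \{ r \mid f_r(m) = \gamma_i(m) \}$, and each set in this intersection is closed because $r \mapsto f_r(m)$ is continuous (it is a slice of the smooth map $(r,m) \mapsto f_r(m)$) and points of $\M$ are closed.

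Thus $\bar B$ is a nondegenerate continuum expressed as a countable union of pairwise disjoint closed subsets. By a classical theorem of Sierpi\'nski (a nondegenerate continuum cannot be partitioned into more than one, but countably many, pairwise disjoint nonempty closed subsets), at most one of the $F_i$ is nonempty; since they cover $\bar B$, exactly one is nonempty and equals $\bar B$. Hence $f_r$ is constant on $\bar B$, and, as $r_0$ was arbitrary, $r \mapsto f_r$ is locally constant. This proves that $\rho(\G)$ is a discrete subspace of $\Diff(\M)$. When $\rho$ is moreover effective it is injective, and it is automatically smooth because $\G$ carries the discrete diffeology; since both $\G$ and its image are discrete, the bijection $\rho \colon \G \to \rho(\G)$ has a smooth inverse as well, so $\rho$ is a diffeomorphism onto its image, that is, an induction.

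The one real obstacle is uniformity in $m$. For each fixed $m$ the map $r \mapsto f_r(m)$ takes values in the countable orbit $\G \cdot m$ and is therefore locally constant, exactly as in the proof of the previous claim; but the neighbourhood on which it is constant depends on $m$, and these neighbourhoods need not admit a common refinement. One cannot in general pin a group element down by the images of a single point, or even of finitely many points, since an effective action of a countable group may have no free orbit and may have stabilizers all of whose finite intersections are nontrivial. Sierpi\'nski's theorem is precisely what converts the pointwise statement into genuine constancy on a neighbourhood by treating the closed fibres $F_i$ all at once.
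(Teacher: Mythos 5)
Your proof is correct, but it takes a genuinely different route from the paper's, and the ``obstacle'' you invoke to justify the detour is illusory. The paper argues exactly along the lines you dismiss: for each fixed $m$, the parametrization $r \mapsto f_r(m)$ takes values in the countable orbit $\G \cdot m$, which is a discrete subspace of $\M$, hence it is locally constant on the whole domain of the plot; but a locally constant map is constant on every connected subset of its domain, in particular on one fixed open ball $\B$ centred at $r_0$. Since $\B$ is chosen independently of $m$, one gets $f_r(m) = f_{r_0}(m)$ for all $r \in \B$ and \emph{all} $m$ at once, so $f_r = f_{r_0}$ on $\B$; there is no need for a common refinement of $m$-dependent neighbourhoods, because connectedness already provides the uniformity. (Your remark that no finite set of points can pin down a group element is true but beside the point: the argument never needs to identify the group element $g_r$, it only compares the diffeomorphisms $f_r$ and $f_{r_0}$ pointwise.) Your replacement step is nevertheless sound: the fibres $F_i = \{ r \in \ol{\B} \mid f_r = \gamma_i \}$ are closed (by continuity of $r \mapsto f_r(m)$ and Hausdorffness of $\M$), pairwise disjoint, and cover the closed ball, so Sierpi\'nski's theorem on partitions of continua into countably many disjoint nonempty closed sets forces a single $F_i$ to equal the ball; and your final deduction, that a smooth bijection between two discrete diffeological spaces is an induction, is also correct. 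What your approach buys is independence from the fact that countable subspaces of manifolds are diffeologically discrete; what it costs is an appeal to a classical but nontrivial theorem of continuum theory in a place where an elementary connectedness observation suffices.
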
 

  \begin{proof} 
    Let $r \mapsto f_r$ be a smooth parametrization 
    in $\Diff(\M)$ with values in $\rho(G)$, 
    and let $r_0$ be a point in the domain of this parametrization.
    Fix $m \in \M$.  Because $\G$ is countable,
    the orbit $G \cdot m$ is countable.
    But a countable subspace of a manifold is diffeologically discrete.
    So the smooth parametrization $\varphi \colon r \mapsto f_r(m)$, 
    which takes its values 
    in the countable subspace $\G \cdot m$ of $M$, is locally constant. 
    But locally constant means constant on the connected components 
    of the domain of the parametrization \cite[chap.~II]{PIZ05-10}. 
    Let $\B$ be an open ball in the domain of $\varphi$ 
    that is centred at $r_0$.
    Then $f_r(m) = f_{r_0}(m)$ for all $r$ in $\B$.
    Repeating this argument for all $m$ in $\M$,
    we deduce that $f_r = f_{r_0}$ for all $r$ in $\B$. 
    Thus, the parametrization $r \mapsto f_r$ is locally constant. 
    In conclusion, the image of $\G$ by $\rho$ is discrete. 
    It follows immediately that if the action is effective
    then $\rho$ is an induction.
  \end{proof}
  
  \begin{claim}
    \artlabel{The case of a discrete stabilizer}
    \labell{The-case-of-a-discrete-stabilizer}
    {\it Let $\M$ be a manifold, let $\G$ be a Lie group,
    and let $\rho \colon \G \to \Diff(\M)$ be an effective smooth action 
    of $\G$ on $\M$. 
    Suppose that there exists a point $m$ in $\M$ such that the stabilizer
    $\G_m$ is discrete.  Then $\rho$ is an induction.}
  \end{claim}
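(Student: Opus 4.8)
The plan is to reduce everything to the two claims already in hand. Claim~\ref{The-orbit-map} tells me that the orbit through $m$ recovers $g_r$ up to the stabilizer $\G_m$, and Claim~\ref{When-G-is-countable}, applied to the restricted action of $\G_m$ itself, will remove the remaining $\G_m$-ambiguity. The discreteness hypothesis is what makes $\G_m$ small enough for the second claim to apply and simultaneously makes the projection $\pi$ a local diffeomorphism, so that recovering $\pi(g_r)$ is almost the same as recovering $g_r$.

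First I would record the geometry forced by the hypothesis. Since $\G_m$ is discrete and $\G$ is second countable, $\G_m$ is countable, and $\pi \colon \G \to \G/\G_m$ is a covering map, hence a local diffeomorphism admitting smooth local sections. By Claim~\ref{The-orbit-map} the map $\orbbar{m} \colon \G/\G_m \to \M$ is an induction.

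To prove that $\rho$ is an induction I must show that its set-theoretic inverse is smooth. So I would take a plot $r \mapsto f_r$ of $\Diff(\M)$ with values in $\rho(\G)$, use effectiveness to write $f_r = \rho(g_r)$ with $g_r$ uniquely determined, and show that $r \mapsto g_r$ is smooth near an arbitrary point $r_0$ of the domain; set $g_0 := g_{r_0}$. Because $r \mapsto f_r$ is a plot, the map $r \mapsto f_r(m) = g_r \cdot m = \orbbar{m}(\pi(g_r))$ is smooth and lands in the orbit $\G \cdot m$; since $\orbbar{m}$ is an induction, the composite $r \mapsto \pi(g_r)$ is a smooth, hence continuous, map into the manifold $\G/\G_m$. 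Choosing a smooth local section $\sigma$ of $\pi$ near $\pi(g_0)$ with $\sigma(\pi(g_0)) = g_0$, and shrinking the domain to a ball $\B$ about $r_0$ on which $\pi(g_r)$ stays in the domain of $\sigma$, I obtain a \emph{smooth} map $\tilde g_r := \sigma(\pi(g_r))$ satisfying $\pi(\tilde g_r) = \pi(g_r)$ and $\tilde g_{r_0} = g_0$.

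The crux is then to compare $\tilde g_r$ with the actual $g_r$, which differ by a stabilizer element $h_r := \tilde g_r^{-1} g_r \in \G_m$, with $h_{r_0} = \id_\G$. Here $\rho(h_r) = \rho(\tilde g_r)^{-1} \circ f_r$; since $r \mapsto \tilde g_r$ is smooth, $r \mapsto \rho(\tilde g_r)$ is a plot, and as $\Diff(\M)$ is a diffeological group and $r \mapsto f_r$ is a plot, the map $r \mapsto \rho(h_r)$ is a plot with values in $\rho(\G_m)$. Applying Claim~\ref{When-G-is-countable} to the restricted action of the countable discrete group $\G_m$ shows that $\rho(\G_m)$ is discrete in $\Diff(\M)$, so the plot $r \mapsto \rho(h_r)$ is locally constant and near $r_0$ equals $\rho(\id_\G)$; effectiveness then forces $h_r = \id_\G$ on $\B$, whence $g_r = \tilde g_r$ there, which is smooth. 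I expect this final step to be the main obstacle: the orbit map determines $g_r$ only modulo $\G_m$, and killing that ambiguity is exactly the place where both effectiveness and second countability (through the countable-stabilizer claim) are indispensable.
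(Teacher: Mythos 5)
Your proof is correct and follows essentially the same route as the paper: obtain a smooth local lift of $r \mapsto \pi(g_r)$ using Claim~\ref{The-orbit-map} (the paper invokes the quotient diffeology directly, you invoke a local section of $\pi$, which is the same mechanism), then apply Claim~\ref{When-G-is-countable} to see that the resulting plot in $\rho(\G_m)$ is locally constant, and finish with effectiveness. The only cosmetic difference is your normalization $\sigma(\pi(g_0)) = g_0$, which makes the locally constant stabilizer element equal to $\id_\G$, whereas the paper carries a general constant $\kappa \in \G_m$.
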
 
  
  \begin{proof}
    Let $r \mapsto f_r$ be a parametrization of $\Diff(\M)$ 
    that takes its values in $\rho(\G)$.
    Let $r \mapsto g_r$ be the parametrization of $\G$
    such that $\rho(g_r) = f_r$ for all $r$.
    Let $r_0$ be a point in the domain of these parametrizations.
    We would like to show that $r \mapsto g_r$ is smooth near $r_0$.
    
    Let $m$ be a point of $\M$ whose stabilizer $\G_m$ is discrete.
    The parametrization $r \mapsto f_r(m)$ of $\M$
    takes its values in the orbit $\G \cdot m$. Since 
    $\orbbar{m} \colon \G/\G_m \to \M$ is an induction
    (Paragraph \ref{The-orbit-map}), 
    and by the definition of the quotient diffeology on $\G/\G_m$,
    there exists a neighbourhood $B$ of $r_0$ and a smooth map
    $r \mapsto g_r'$ from $B$ to $\G$ 
    such that $f_r(m) = \rho(g_r')(m)$ for all $r \in B$. 
    The map $r \mapsto \rho(g_r')^{-1} \circ f_r$ is a plot of $\Diff(\M)$
    that takes its value in $\rho(\G_m)$. But since
    $\G_m$ is assumed to be discrete, $\rho(\G_m)$ is also discrete 
    (Paragraph~\ref{When-G-is-countable}). So, 
    $r \mapsto  \rho(g_r')^{-1} \circ f_r$ is locally constant.  
    Therefore, after possibly shrinking the neighbourhood $B$ of $r_0$,
    there exists a constant element $\kappa \in G_m$
    such that $\rho(g_r')^{-1} \circ f_r = \rho(\kappa)$, that is,
    $f_r = \rho(g_r' \kappa)$, for all $r \in B$.
    But also $f_r = \rho(g_r)$.  Because the action is effective,
    we deduce that $g_r = g_r' \kappa$ for all $r \in B$.
    Because $r \mapsto g_r'$ is smooth and $\kappa$ is constant,
    this implies that $r \mapsto g_r$ is smooth.
  \end{proof}
  
  \begin{claim}
    \artlabel{Intersecting vector subspaces} 
    \labell{Intersecting-vector-subspaces}
    {\it
    Let $\E$ be a finite dimensional vector space.
    Let $\{\E_i\}_{i \in \cI}$ be a family of subspaces
    of $\E$. If $\cap_{i \in \cI} \E_i = \{0\}$, then there exists
    a finite set of indices $(i,j,\ldots,k) \subset \cI$ such that
    $\E_{i} \cap \E_j \cap \cdots \cap \E_{k} = \{0\}$.}
  \end{claim}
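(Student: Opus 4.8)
The plan is to exploit the fact that the dimension of a subspace of $\E$ is a non-negative integer bounded above by $\dim \E$, so it cannot descend indefinitely. Concretely, I would look at all \emph{finite} sub-intersections: for each finite subset $F \subset \cI$ set $\E_F := \bigcap_{i \in F} \E_i$, a subspace of $\E$. The set of integers $\{\, \dim \E_F \mid F \subset \cI \text{ finite} \,\}$ is a nonempty subset of $\{0, 1, \ldots, \dim \E\}$, hence it attains its minimum at some finite $F_0$; write $d := \dim \E_{F_0}$ for this minimal value.

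The key step is then to show that this minimizing finite intersection already coincides with the total intersection. For any index $j \in \cI$, enlarging $F_0$ to $F_0 \cup \{j\}$ gives $\E_{F_0 \cup \{j\}} = \E_{F_0} \cap \E_j \subseteq \E_{F_0}$, so $\dim \E_{F_0 \cup \{j\}} \le d$. By minimality of $d$ this dimension must equal $d$, which forces $\E_{F_0} \cap \E_j = \E_{F_0}$, i.e.\ $\E_{F_0} \subseteq \E_j$. Since $j \in \cI$ was arbitrary, $\E_{F_0} \subseteq \bigcap_{i \in \cI} \E_i = \{0\}$, whence $\E_{F_0} = \{0\}$. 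Thus $F_0$ is exactly the finite index set demanded by the statement.

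An equivalent, more algorithmic route is a descending-chain argument: pick any index $i_1$; having chosen $i_1, \ldots, i_n$ with $W := \E_{i_1} \cap \cdots \cap \E_{i_n} \neq \{0\}$, the hypothesis $\bigcap_{i \in \cI} \E_i = \{0\}$ guarantees some index $j$ with $W \not\subseteq \E_j$ (otherwise $W$ would be contained in every $\E_i$, hence in their intersection $\{0\}$), so that $W \cap \E_j$ is a \emph{proper} subspace of $W$ and $\dim(W \cap \E_j) < \dim W$. Adjoining such a $j$ strictly drops the dimension, so after at most $\dim \E$ steps the running intersection reaches $\{0\}$. There is no serious obstacle here; the only point to notice is that finiteness is forced purely by the boundedness of subspace dimensions, so the collection of finite sub-intersections, ordered by reverse inclusion, cannot strictly descend forever and must attain a minimal element, which the triviality hypothesis then identifies with $\{0\}$.
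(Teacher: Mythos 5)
Your proof is correct, in both variants. The engine is the same as in the paper's proof---a strictly decreasing sequence of subspace dimensions, bounded below by $0$, must terminate within $\dim E$ steps---but the organization differs. The paper proceeds by induction on $\dim E$: since the total intersection is trivial, some $E_i$ is a proper subspace of $E$; it then replaces the family $\{E_j\}$ by $\{E_i \cap E_j\}$, viewed as subspaces of $E_i$, and invokes the inductive hypothesis inside $E_i$. Your second, ``algorithmic'' variant is precisely this induction unrolled into a greedy descending chain, so it is in essence the paper's argument in iterative form. Your first variant, however, is a genuinely different packaging: an extremal argument with no recursion. You minimize $\dim E_F$ over all finite $F \subset \mathcal{I}$ and show that minimality forces $E_{F_0} \subseteq E_j$ for every $j \in \mathcal{I}$, hence $E_{F_0} \subseteq \bigcap_{i \in \mathcal{I}} E_i = \{0\}$. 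What this buys is brevity and a cleaner isolated principle (a finite sub-intersection of minimal dimension is automatically the total intersection); what the paper's induction buys is an explicit step-by-step construction of the index set without quantifying over all finite subsets of $\mathcal{I}$. One pedantic point in your chain variant: ``pick any index $i_1$'' presumes $\mathcal{I} \neq \varnothing$; in the degenerate case $\mathcal{I} = \varnothing$ the hypothesis forces $E = \{0\}$ and the empty index set works, which is exactly the base case the paper handles explicitly (and which your minimality variant also covers, since $F = \varnothing$ is allowed there).
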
 
  
  \begin{proof}
    We prove the result by induction on the dimension of $E$.
    If $\E = \{ 0 \}$, the result is trivial: we can take
    the finite set of indices to be the empty set.
    Suppose that the result is true for vector spaces 
    of dimension smaller than that of $E$.
    Because $\cap_{i \in \cI} \E_i = \{0\}$, 
    there exists at least one vector subspace $\E_{i}$ such that
    $\E_{i} \neq \E$, that is $\dim(\E_{i}) < \dim(\E)$. 
    Let us define $\cI' := \cI \smallsetminus \{i\}$, 
    $\E' := \E_{i}$, and 
    $\E'_{j} := \E' \cap \E_{j} \text{ \ for all } j \in \cI'$.
    We have $\cap_{j \in \cI} \E_j = \cap_{j \in \cI'} \E_{j}$.
    Thus, $\{E_{j}\}_{j \in \cI'}$ is a family of
    vector subspaces of $\E' (=\E_{i})$ with 
    $\cap_{j \in \cI'} \E'_{j} = \{0\}$.
    By the induction hypothesis, there is a finite set
    of indices $\{ j , \ldots , k \} \subset \cI'$
    such that $\E'_{j} \cap \ldots \cap \E'_{k} = \{ 0 \}$.
    The union of this set with the index $\{ i \}$
    is a finite set of indices $\{ i , j , \ldots , k \} \subset \cI$
    such that $\E_i \cap \E_j \cap \ldots \cap \E_k = \{ 0 \}$.
  \end{proof}
  
  \begin{claim}
    \artlabel{Obtaining a discrete intersection of stabilizers}
    \labell{Obtaining-a-discrete-stabilizer} 
    {\it Let $\M$ be a manifold, $\G$ a Lie group, 
    and $\rho \colon \G \to \Diff(\M)$ an effective smooth action.
    Then there exists a finite set of points
    $\{ m_1,\ldots,m_\N \} \subset \M$ such that the intersection
    of their stabilizers, $\G_{m_1} \cap \ldots \cap G_{m_\N}$,
    is discrete.}
  \end{claim}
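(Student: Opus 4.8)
The plan is to pass to the infinitesimal level and invoke the linear-algebra lemma of Paragraph~\ref{Intersecting-vector-subspaces}. Write $\cG$ for the Lie algebra of $\G$. For each $m \in \M$, recall from the proof of Paragraph~\ref{The-orbit-map} that the Lie algebra of the stabilizer $\G_m$ is the subspace $\cG_m = \{\xi \in \cG \mid \xi_\M(m) = 0\}$ of $\cG$, where $\xi_\M$ denotes the fundamental vector field associated with $\xi$. The first and essential step is to show that these subspaces have trivial common intersection, $\bigcap_{m \in \M} \cG_m = \{0\}$. A vector $\xi$ lies in this intersection exactly when $\xi_\M(m) = 0$ for every $m$, that is, when the fundamental vector field $\xi_\M$ vanishes identically on $\M$. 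In that case its flow is the identity, so the diffeomorphism $\rho(\exp(t\xi))$ equals $\id_\M$ for every $t \in \RR$; since $\rho$ is effective, hence injective, this forces $\exp(t\xi) = \id_\G$ for all $t$, and differentiating at $t = 0$ yields $\xi = 0$. (Equivalently, $\bigcap_{m \in \M}\G_m$ is precisely the kernel of $\rho$, which is trivial by effectiveness, and the Lie algebra of an intersection of closed subgroups is the intersection of their Lie algebras.)

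Next I would apply Paragraph~\ref{Intersecting-vector-subspaces} to the finite dimensional vector space $\cG$ and the family of subspaces $\{\cG_m\}_{m \in \M}$. Since this family has trivial intersection, the lemma produces a finite set of points $\{m_1, \ldots, m_\N\} \subset \M$ for which $\cG_{m_1} \cap \cdots \cap \cG_{m_\N} = \{0\}$. To finish, set $H := \G_{m_1} \cap \cdots \cap \G_{m_\N}$. Being an intersection of finitely many closed subgroups, $H$ is a closed subgroup of $\G$, and its Lie algebra is $\cG_{m_1} \cap \cdots \cap \cG_{m_\N} = \{0\}$, again because $\exp(t\xi)$ lies in $H$ if and only if it lies in each $\G_{m_i}$. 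A Lie group with trivial Lie algebra is zero dimensional, i.e.\ discrete, so $H$ is discrete, as required.

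The only delicate point is the first step: converting the hypothesis that $\rho$ is \emph{effective}, which is a global condition on the group, into the infinitesimal statement $\bigcap_{m \in \M}\cG_m = \{0\}$. Once that identification is in place, the remainder is a direct application of the finite-dimensional lemma of Paragraph~\ref{Intersecting-vector-subspaces}, together with the standard fact that the Lie algebra of a closed subgroup $H$ of $\G$ is $\{\xi \in \cG \mid \exp(t\xi) \in H \text{ for all } t \in \RR\}$.
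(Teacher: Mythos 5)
Your proof is correct and follows essentially the same route as the paper: use effectiveness to show $\bigcap_{m \in \M} \cG_m = \{0\}$, apply the lemma of Paragraph~\ref{Intersecting-vector-subspaces} to extract finitely many points $m_1,\ldots,m_\N$ with $\cG_{m_1} \cap \cdots \cap \cG_{m_\N} = \{0\}$, and conclude that $\G_{m_1} \cap \cdots \cap \G_{m_\N}$ is discrete. The only cosmetic difference is at the last step, where the paper establishes diffeological discreteness directly (any smooth parametrization into the intersection has vanishing left-translated differential, hence is locally constant), whereas you invoke Cartan's closed-subgroup theorem to get a zero-dimensional, hence discrete, subgroup; both arguments are valid.
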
 
  
  \begin{proof}
    Denote by $\cG$ the Lie algebra of $\G$,
    and, for every point $m$ of $\M$, 
    denote by $\cG_m$ the Lie algebra of the stabilizer $\G_m$ of $m$.
    For any smooth parametrization $\P \colon \U \to \G$,
    if $\P$ takes values in $G_m$, then, for every $r \in \U$,
    the composition of the differential 
    $d\P_r \colon T_r \U \to T_{\P(r)} \cG$
    with the left translation by $P(r)^{-1}$ takes values in 
    the subspace $\cG_m$ of $\cG$.
    
    Because the action is effective, 
    $\cap_{m \in \M} \G_m = \{ 1 \}$.
    Recall that $\cG_m = \{ \xi \in \cG \ | \ 
      \exp(t\xi) \in \G_m \text{ for all } t \in \RR \}$.
    It follow that
    $\cap_{m \in \M} \cG_m = \{ 0 \}$.
    According to Paragraph~\ref{Intersecting-vector-subspaces}, there
    exists a finite set of points, say, $\{ m_1, \ldots, m_\N \} \subset \M$,
    such that $\cG_{m_1} \cap \cdots \cap \cG_{m_\N} = \{0\}$.  
    
    Let $\P \colon \U \to \G$ be a smooth parametrization that takes values
    in $\G_{m_1} \cap \cdots \cap \G_{m_\N}$.
    For every $r \in \U$, the composition of the differential 
    $d\P_r \colon T_r \U \to T_{\P(r)} \G$
    with the left translation by $P(r)^{-1}$ takes values 
    in $\cG_{m_1} \cap \ldots \cap \cG_{m_\N}$.
    So the differential $d P_r$ is zero for every $r \in \U$,
    and so the map $\P$ is locally constant.
    Therefore, $\G_{m_1} \cap \cdots \cap \G_{m_\N}$ is discrete.
  \end{proof}
  
  \begin{claim}
    \artlabel{Smooth Lie group actions}
    \labell{smooth-Lie-group-actions}
    {\it
    Let $\M$ be a manifold, $\G$ a Lie group,
    and $\rho \colon \G \to \Diff(\M)$ an effective smooth action.
    Then $\rho$ is an induction. That is, $\rho$ is a diffeomorphism 
    onto its image, when the image is equipped with the subset diffeology.}
    
    {\sc Note.}   Let $\M$ be a manifold, $\G$ a Lie group,
    and $\rho \colon \G \to \Diff(\M)$ a smooth action
    that is not necessarily effective.  Let $\K = \ker \rho$.
    Applying Claim~\ref{smooth-Lie-group-actions} to the quotient $\G/\K$,
    we deduce that $\rho$ induces a diffeomorphism 
    from $\G/\K$ to its image in $\Diff(\M)$.
  \end{claim}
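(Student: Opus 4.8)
The plan is to reduce to the case of a discrete stabilizer, already settled in Claim~\ref{The-case-of-a-discrete-stabilizer}, by passing to a diagonal action on a finite power of $\M$. First I would invoke Claim~\ref{Obtaining-a-discrete-stabilizer} to obtain a finite set of points $m_1,\ldots,m_\N$ in $\M$ whose stabilizers satisfy that $\G_{m_1} \cap \cdots \cap \G_{m_\N}$ is discrete. The key construction is the diagonal homomorphism $\Delta \colon \Diff(\M) \to \Diff(\M^\N)$, $f \mapsto f \times \cdots \times f$ ($\N$ factors): it is a group homomorphism, and it is smooth in the diffeological sense, because a plot $r \mapsto f_r$ of $\Diff(\M)$ is carried to the parametrization $r \mapsto f_r \times \cdots \times f_r$, whose associated map $(r,(x_1,\ldots,x_\N)) \mapsto (f_r(x_1),\ldots,f_r(x_\N))$ into $\M^\N$ is smooth componentwise. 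Thus $\Delta \circ \rho \colon \G \to \Diff(\M^\N)$ is a smooth action of $\G$ on $\M^\N$.

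Next I would check that $\Delta \circ \rho$ satisfies the hypotheses of Claim~\ref{The-case-of-a-discrete-stabilizer}. It is effective: if $(\Delta \circ \rho)(g)$ is the identity of $\M^\N$, then $\rho(g)$ is the identity of $\M$, whence $g = \id$ by the effectiveness of $\rho$. Moreover, the stabilizer of the point $(m_1,\ldots,m_\N) \in \M^\N$ under this diagonal action is exactly $\G_{m_1} \cap \cdots \cap \G_{m_\N}$, which is discrete by the choice of the $m_i$. Claim~\ref{The-case-of-a-discrete-stabilizer} therefore applies and shows that $\Delta \circ \rho$ is an induction.

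The last step is a formal cancellation: whenever a composite $\Delta \circ \rho$ is an induction and $\Delta$ is smooth, the map $\rho$ is itself an induction. Indeed, $\rho$ is injective because $\Delta \circ \rho$ is, and $\rho$ is smooth by hypothesis, so only the smoothness of the inverse on the image needs checking. Given any plot $r \mapsto f_r$ of $\Diff(\M)$ with values in $\rho(\G)$, write $f_r = \rho(g_r)$; then $r \mapsto \Delta(f_r) = (\Delta \circ \rho)(g_r)$ is a plot of $\Diff(\M^\N)$ with values in the image of $\Delta \circ \rho$, since $\Delta$ is smooth. Because $\Delta \circ \rho$ is an induction, the map $r \mapsto g_r$ is smooth; as $f_r = \rho(g_r)$, this is exactly the smoothness of $\rho^{-1}$ on its image. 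Hence $\rho$ is an induction.

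I expect the only delicate point to be this final cancellation, where one must keep the diffeological definitions aligned: the identity $(\Delta \circ \rho)^{-1} \circ \Delta \circ \rho = \id_\G$, valid because $\Delta \circ \rho$ is injective, is what lets the induction property pass from the composite down to $\rho$. Everything analytically substantial --- the transversality and implicit function theorem arguments, the countability of $\G \cdot m \cap \D$, and the reduction to a discrete stabilizer --- has already been carried out in the earlier claims, so the remaining argument is essentially bookkeeping.
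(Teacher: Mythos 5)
Your proposal is correct and follows essentially the same route as the paper: both reduce to the diagonal action on $\M^\N$ at a point with discrete stabilizer (via Claims~\ref{Obtaining-a-discrete-stabilizer} and~\ref{The-case-of-a-discrete-stabilizer}) and then transfer the induction property back to $\rho$ by composing plots with the diagonal map. Your explicit diagonal homomorphism $\Delta$ and the abstract cancellation lemma are just a cleaner packaging of what the paper does directly with the plot $r \mapsto f_{\N,r}$.
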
 
  
  \begin{proof} 
    By Paragraph~\ref{Obtaining-a-discrete-stabilizer}, there exists 
    a natural number $\N$ and a point $\mu = (m_1, \ldots, m_\N)$
    in $\M^\N$ whose stabilizer $\G_\mu$ under the diagonal action
    of $\G$ on $\M^\N$ is discrete.   Write this diagonal action as
    $\rho_\N \colon \G \to \Diff(\M^\N)$, that is,
    $\rho_\N(g)(m_1,\ldots,m_\N) = (g \cdot m_1 , \ldots , g \cdot m_\N)$.
    By Paragraph~\ref{The-case-of-a-discrete-stabilizer}, 
    the map $\rho_N \colon \G \to \Diff(\M^\N)$ is an induction.
    Now, let $r \mapsto f_r$ be a plot of $\Diff(\M)$ 
    that takes its values in $\rho(\G)$, and, for every $r$,
    let $g_r$ be the element of $\G$ such that $f_r = \rho(g_r)$.
    Let $f_{\N,r} (m_1,\ldots,m_\N) := (f_r(m_1),\ldots, f_r(m_\N))$.
    Then $r \mapsto f_{\N,r}$ is a plot of $\Diff(\M^\N)$
    that takes its values in $\rho_\N(\G)$,
    and $f_{\N,r} = \rho_\N(g_r)$ for all $r$.
    Because $\rho_\N$ is an induction,  it follows that
    $r \mapsto g_r$ is smooth.  Thus, $\rho$ is an induction.
  \end{proof}
  

  \section{A simple case of embedding} 
  \labell{sec:Simple-cases-of-embeddings}

  \begin{text}
    We have proved the theorem that we stated in the introduction:
    an effective smooth Lie group action $\rho \colon \G \to \Diff(\M)$
    is an induction.  

    It is now natural to ask when is such $\rho$ an \emph{embedding},
    that is, a homeomorphism of $\G$ with its image, when the image
    is equipped with the subset topology of the D-topology of $\Diff(\M)$. 
    In other words, how do these two topologies compare?
    Paragraph~\ref{An-orbit-map-embedding-condition} gives a 
    sufficient condition. Applied to 
    $\GL(n,\RR)$ acting naturally on $\RR^n$, this condition shows
    again that $\GL(n,\RR)$ is embedded in $\Diff(\RR^n)$ \cite{Igl85}.
  \end{text}

  \begin{claim}
    \artlabel{An orbit map embedding condition}
    \labell{An-orbit-map-embedding-condition}
    {\it Let $\M$ be a manifold, $\G$ a Lie group,  
    and $\rho \colon \G \to \Diff(\M)$ an effective smooth action.
    Suppose that there exists a natural number $k$ and a point
    $\mu = (m_1, \ldots, m_k)$ in $\M^k$ 
    such that the orbit map 
    $\orb{\mu} \colon \G \to \M^k$ for the diagonal action is an embedding.
    Then $\rho \colon \G \to \Diff(\M)$ is an embedding.}

    {\sc Note 1.} Considering the $n$ standard basis vectors of $\RR^n$,
    we conclude that $\GL(n,\RR)$ is embedded
    in $\Diff(\RR^n)$. The same applies to classical Lie groups
    such as the orthogonal groups, the unitary groups, the symplectic groups,
    etc.  

    {\sc Note 2.}  Every diffeological group is embedded (through 
    left translation) 
    in its group of diffeomorphisms \cite[chap.~VII]{PIZ05-10}.
    In particular, $\T^2$ is embedded in $\Diff(\T^2)$. 
    The irrational flow $\R \to \Diff(\T^2)$, given by
    $\rho(t)([x,y]) = [x+t,y+\alpha t]$ with $\alpha$ irrational, 
    splits as a composition $\R \to \T^2 \to \Diff(\T^2)$.
    Because the first arrow is not an embedding 
    (cf.~Note~2 of Paragraph~\ref{The-orbit-map})
    and the second arrow is an embedding,
    {\em a fortiori}, this flow is not embedded in $\Diff(\T^2)$.
    Nevertheless, by our main theorem, this flow \emph{is} 
    a diffeomorphism with its image in $\Diff(\T^2)$.

  \end{claim}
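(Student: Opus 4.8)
The plan is to reduce the topological statement to the hypothesis by factoring the orbit map $\orb{\mu}$ through $\Diff(\M)$. Since $\rho$ is already an induction by Claim~\ref{smooth-Lie-group-actions}, proving that it is an embedding amounts to checking a single topological condition: that $\rho$ is a homeomorphism from $\G$ onto its image, where the image carries the subset topology of the D-topology of $\Diff(\M)$. As $\rho$ is smooth, hence continuous for the D-topologies, and thus continuous as a map onto $\rho(\G)$, the only point left to verify is that the inverse $\rho^{-1} \colon \rho(\G) \to \G$ is continuous.

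First I would introduce the evaluation map
$$
\ev_\mu \colon \Diff(\M) \to \M^k, \qquad f \mapsto (f(m_1), \ldots, f(m_k)),
$$
and observe that it is smooth directly from the definition of the functional diffeology: for any plot $r \mapsto f_r$ of $\Diff(\M)$, each coordinate $r \mapsto f_r(m_i)$ is smooth, so $r \mapsto \ev_\mu(f_r)$ is a plot of $\M^k$. Consequently $\ev_\mu$ is continuous for the D-topologies, and the D-topology of $\M^k$ is its usual manifold topology. The crucial identity is then $\orb{\mu} = \ev_\mu \circ \rho$, which holds because $\ev_\mu(\rho(g)) = (g \cdot m_1, \ldots, g \cdot m_k) = \orb{\mu}(g)$.

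Next I would exploit the hypothesis that $\orb{\mu}$ is an embedding, so that $\orb{\mu}^{-1} \colon \G \cdot \mu \to \G$ is continuous for the subspace topology on $\G \cdot \mu \subseteq \M^k$. Since the action is effective (so $\rho$ is injective) and $\orb{\mu}$ is injective, the restriction $\ev_\mu|_{\rho(\G)}$ carries $\rho(\G)$ bijectively onto $\G \cdot \mu$, and the factorization $\orb{\mu} = \ev_\mu \circ \rho$ rearranges to
$$
\rho^{-1} = \orb{\mu}^{-1} \circ \left( \ev_\mu|_{\rho(\G)} \right)
$$
as maps $\rho(\G) \to \G$. The right-hand side is a composition of continuous maps---$\ev_\mu|_{\rho(\G)}$ being a corestriction of the continuous map $\ev_\mu$ to subspaces---so $\rho^{-1}$ is continuous. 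This yields the required homeomorphism, and combined with the fact that $\rho$ is an induction, it shows that $\rho$ is an embedding.

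I do not expect a serious analytic obstacle here; the one step that demands care is the bookkeeping of topologies, namely confirming that the subset topology induced from the D-topology of $\Diff(\M)$ on $\rho(\G)$, the subspace topology on $\G \cdot \mu \subseteq \M^k$, and the usual topology of the manifold $\M^k$ all fit together so that the displayed factorization genuinely expresses $\rho^{-1}$ as a composite of continuous maps. Once the smoothness (hence D-continuity) of $\ev_\mu$ is in place, the identity $\orb{\mu} = \ev_\mu \circ \rho$ does all of the work.
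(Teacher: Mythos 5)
Your proof is correct and is essentially the paper's own argument: the paper's set $\Omega = \{ f \in \Diff(\M) \mid (f(m_1),\ldots,f(m_k)) \in \V \}$ is exactly the preimage $\ev_\mu^{-1}(\V)$ under your evaluation map, and its plot-by-plot verification that $\Omega$ is D-open is exactly your observation that $\ev_\mu$ is smooth, hence D-continuous. The only difference is packaging --- the paper chases open sets (producing, for each open $\B \subset \G$, a D-open $\Omega$ with $\rho(\B) = \rho(\G) \cap \Omega$) where you express the same content as continuity of the composite $\rho^{-1} = \orb{\mu}^{-1} \circ \bigl( \ev_\mu|_{\rho(\G)} \bigr)$, both resting on the identity $\orb{\mu} = \ev_\mu \circ \rho$ and the main theorem for the induction part.
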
 

  \begin{proof} 
    Let $\B \subset \G$ be an open set, and let 
    $$
      \U = \orb{\mu}(\B) = \{(g \cdot m_1, \ldots, g \cdot m_k) \in \M^k 
      \mid g \in \B \}.
      $$
    Since, by assumption, $\orb{\mu}$ is an embedding, $\U$ is open 
    in $\G \cdot \mu$ with respect to the subset topology. So, 
    there exists an open subset $\V \subset \M^k$
    such that $\U = \V \cap \G \cdot \mu$.  Let us introduce the set
    $$
      \Omega = \{ f \in \Diff(\M) \mid (f(m_1),\ldots,f(m_k)) \in \V \}.
      $$
    Let us check now that $\Omega$ is open for the D-topology of the
    functional diffeology
    and that $\rho(B) = \rho(\G) \cap \Omega$. This will prove the claim.
    
    Let $\P \colon \cO \to \Diff(\M)$ be a plot.
    The preimage $\P^{-1}(\Omega)$ consists of those $r \in \cO$ 
    such that $$(\P(r)(m_1),\ldots,\P(r)(m_k)) \in \V.$$
    That is, $\P^{-1}(\Omega) = (\orb{\mu} \circ \P)^{-1} (\V)$. But
    $\orb{\mu} \circ \P$ is smooth, hence D-continuous, and $\V$ is open. 
    Thus, $\P^{-1}(\Omega)$ is open. 
    Because this is true for every plot $\P \colon \cO \to \Diff(\M)$,
    we deduce that $\Omega$ is D-open.
    
    Let us check now that $\rho(B) = \rho(\G) \cap \Omega$.
    Let $g' \in \G$. By the definition of $\Omega$, 
    the diffeomorphism $\rho(g')$ belongs to $\Omega$ if and only if 
    the $k$-tuple $(g' \cdot m_1 , \ldots , g' \cdot m_k)$ is in $\V$.   
    But this $k$-tuple is automatically in $\G \cdot \mu$. 
    So $\rho(g')$ belongs to $\Omega$ if and only if
    $(g' \cdot m_1 , \ldots , g' \cdot m_k)$ belongs to 
    the intersection $\V \cap \G \cdot \mu$, which, in turn, is equal to $\U$,
    which is $\orb{\mu}(\B)$. 
    Thus, $\rho(g')$ belongs to $\Omega$ if and only if
    there exists $g \in \B$ such that 
    $(g' \cdot m_1, \ldots, g' \cdot m_k)
      = (g \cdot m_1, \ldots, g \cdot m_k)$.   This last condition holds
    exactly if $g^{-1} g' \in \G_\mu$. 
    Because $\G_\mu$ is trivial, $\rho(g')$ belongs to $\Omega$
    if and only if $g' = g \in \B$.  
    
    Therefore, $\Omega$ is D-open in $\Diff(\M)$, 
    and $\rho(B) = \rho(\G) \cap \Omega$.
    So $\rho(B)$ is open with respect to the subset topology on $\rho(\G)$ 
    that is induced from the D-topology in $\Diff(\M)$. 
    The proof is complete. 
  \end{proof}
  
  \newpage


\end{document}